\newtheorem{theorem}{Theorem}[section]
\newtheorem{proposition}{Proposition}[section]
\newtheorem{lemma}{Lemma}[section]
\newtheorem{conjeture}{Conjeture}[section]
\numberwithin{equation}{section}
\title[Three power-exponential inequalities]{
The proof of three power-exponential inequalities}
\author[Coronel]{An{\'\i}bal\ Coronel$^\dag$}
\author[Huancas]{Fernando Huancas$^\dag$}
\thanks{$^\dag$ GMA, Departamento de Ciencias B\'asicas,
Facultad de Ciencias, Universidad del B\'{\i}o-B\'{\i}o,
Campus Fernando May, Chill\'{a}n, Chile,
E-mail: {\tt acoronel@ubiobio.cl, fihuanca@gmail.com}}
\date{\today}
\begin{document}

\begin{abstract}
In this paper we prove  three power-exponential inequalities for 
positive real numbers.
In particular, we conclude that this
proofs give affirmatively answers to
three, until now, open problems (conjectures~4.4, 2.1 and 2.2)
posed by C{\^{\i}}rtoaje in the following two works:
 ``{\it J. Inequal. Pure Appl. Math.} 10, Article 21, 2009'' 
and ``{\it J.  Nonlinear Sci. Appl.} 4:2:130-137, 2011''.
Moreover, we present a new proof of the inequality
$a^{ra}+b^{rb}\ge a^{rb}+b^{ra}$ for all positive
real numbers $a$ and $b$ and $r\in [0,e]$.
In addition, three new conjectures are presented.
\end{abstract}

\keywords{power inequalities, exponential inequalities, power-exponential inequalities}

\maketitle

\section{Introduction}

The power-exponential functions have useful applications in mathematical analysis
and in other theories like Statistics \cite{dahmani2013},
Biology \cite{NMO:NMO1572,NMO:NMO1024}, Optimization \cite{Park20011},
Ordinary Differential equations \cite{bruno2012} and
Probability \cite{ECP2318}. In the recent years there is
a intensive research in this area, 
see for instance 
\cite{miyagui_2014,miyagui_2013,yinli,hisasue2012,cirtoaje2,cirtoaje,
corhuanc,manyama_2010,ladislav_2008,fenqi,fenqi_1998}
and the recent overview on general mathematical inequalities done by
Cerone and Dragomir~\cite{pietrodragomir}. 
Some problems look like very simple
but, are  difficult to solve. For instance, we have
the following two classical problems: find the solution
of the equation $ze^z=a$ and  the basic problem of comparing $a^b$ and $b^a$
for all positive real numbers $a$ and $b$.
The first problem is perhaps one of the most ancient and useful problems concerning
to power-exponential functions,
see for instance \cite{wright_1959_1,wright_1959_2,wright_1960}. 
It was introduced by Lambert in \cite{lambert} and
have been studied  by recognized mathematicians like
Euler, P\'olya, Szeg\"o and Knuth, see \cite{euler,polya,corless}.
The solution to the problem  have been inspirited the definition
of the well-known $W$-Lambert function, see \cite{hoorfar_2008}. For the solution
to the second problem, see the discussion
given in \cite{bullen,luowen} and more recently in \cite{fenqi}.
Moreover, in spite of its algebraic simplicity, booth problems
are the central topic of a large number of research papers in the last years
(see \cite{miyagui_2014,cirtoaje2,corhuanc} and references therein).
In particular, in this paper, we are interested in some inequalities
conjectured by C{\^{\i}}rtoaje in \cite{cirtoaje,cirtoaje_2011},
which are very close to the second problem. To be more specific,
we start by recalling that in \cite{zeikii} was introduced and
probed the following assertion: the inequality
$a^a+b^b\ge a^b+b^a$ holds for all positive real
numbers less than or equal to~1. After that,
C{\^{\i}}rtoaje \cite{cirtoaje} introduce, prove and conjecture
several results about inequalities
for power-exponential functions.
In particular, in \cite{cirtoaje},  was established that the inequality
\begin{eqnarray}
 a^{ra}+b^{rb}\ge a^{rb}+b^{ra},
 \label{eq:cirtioaje_main}
\end{eqnarray}
holds true for $r\in [0,e]$ and for either $a\ge b\ge 1/e$ or
$1/e\ge a\ge b>0 $. However, in \cite{cirtoaje},
C{\^{\i}}rtoaje  leaves as an open problem the proof of 
\eqref{eq:cirtioaje_main} for $1>a>1/e>b>0$. 
Moreover, in \cite{cirtoaje}  the following
conjectures were introduced
\begin{enumerate}
 \item[]{\it Conjecture 4.3}. If $a,b,c$ are positive real numbers, then
 $a^{2a}+b^{2b}+c^{2c}\ge a^{2b}+b^{2c}+c^{2a}$.
  \item[]{\it Conjecture 4.4}. Let $r$ be a positive real number. The
  inequality
 \begin{eqnarray}
 a^{ra}+b^{rb}+c^{rc}\ge a^{rb}+b^{rc}+c^{ra},
 \label{main:inequality}
\end{eqnarray}
holds true for all positive real numbers $a,b,c$ with $a\le b\le c$
if and only if $r\le e.$
\item[]{\it Conjecture 4.6}. Let $r$ be a positive real number. The
inequality $a^{ra}+b^{ra}\ge 2$ holds for all nonnegative real numbers
$a$ and $b$ if and only if $r\le 3.$
\item[]{\it Conjecture 4.7}. If $a$ and $b$ are nonnegative real numbers
such that $a+b=2$, then $a^{3b}+b^{3a}+2^{-4}(a-b)^4\ge 2.$
\item[]{\it Conjecture 4.8}. If $a$ and $b$ are nonnegative real numbers
such that $a+b=1$, then $a^{2b}+b^{2a}\le 1.$
\end{enumerate}
Afterwards, the analysis of \eqref{eq:cirtioaje_main} was completed 
by  Manyama in \cite{manyama_2010}.
Thereafter, of the C{\^{\i}}rtoaje conjectures, 
the  milestones of the history are the works
of Coronel and Huancas \cite{corhuanc},
Matej{\'{\i}}{\v{c}}ka \cite{ladislav_2009},
Yin-Li \cite{yinli} and
Hisasue\cite{hisasue2012} (see also the work of C{\^{\i}}rtoaje \cite{cirtoaje_2011}), 
where they proved the conjectures 4.3,  4.6, 4.7 and 4.8,
respectively.  Here, we should be 
comment that the proof of conjecture 4.4 is still open.
Subsequently,
in 2011 C{\^{\i}}rtoaje introduce a new proof of \eqref{eq:cirtioaje_main}
and present the following three new conjectures 
\begin{enumerate}
 \item[]{\it Conjecture 2.1}. If $a,b\in ]0,1]$ and $r\in ]0,e],$
 then  $2\sqrt{a^{ra}b^{rb}}\ge a^{rb}+b^{ra}$.
  \item[]{\it Conjecture 2.2}. If $a,b,c\in ]0,1]$,
 then  $3a^{a}b^{b}c^{c}\ge (abc)^a+(abc)^b+(abc)^c$.
  \item[]{\it Conjecture 5.1}. If $a,b$ are nonnegative real numbers
  satisfying $a+b=1$. If $k\ge 1,$
 then  $a^{(2b)^k}+b^{(2a)^k}\le 1$.
\end{enumerate}
Recently, Miyagi and Nishizawa \cite{miyagui_2014} has been proved the Conjecture 5.1. However,
the conjectures 2.1 and 2.2 are still open. Thus, the  main focus of this 
paper are the proofs of  conjectures  4.4, 2.1 and 2.2.

The main contribution of the present paper is the development of the proof
of the following three theorems:
\begin{theorem}
\label{teo:cirtoaje}
The inequality \eqref{eq:cirtioaje_main} holds, for all
positive real numbers $a,b$ and for all $r\in [0,e]$.
\end{theorem}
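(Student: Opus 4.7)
The plan is to reduce the statement to the single subcase not covered by the existing literature and then settle that subcase by a one-variable calculus argument. First I would use symmetry in $a\leftrightarrow b$ to assume $a\ge b>0$, and since $r=0$ is trivial, take $r\in(0,e]$. Writing
\begin{equation*}
F(a,b,r):=a^{ra}+b^{rb}-a^{rb}-b^{ra}=(a^{ra}-a^{rb})+(b^{rb}-b^{ra}),
\end{equation*}
both parenthesised terms are nonnegative whenever $a\ge 1\ge b$, because $ra\ge rb$ and $s\mapsto x^{s}$ is increasing on $[1,\infty)$ and decreasing on $(0,1]$. Combined with C{\^{\i}}rtoaje's 2009 result covering the regimes $a\ge b\ge 1/e$ and $1/e\ge a\ge b>0$, this leaves only the open domain
\begin{equation*}
\mathcal{D}:=\{(a,b):\ 0<b<1/e<a<1\}.
\end{equation*}

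For $(a,b)\in\mathcal{D}$ and $r\in(0,e]$, I would fix $b$ and $r$ and study $g(a):=F(a,b,r)$ on $a\in[1/e,1]$. The endpoint $a=1/e$ falls into the known C{\^{\i}}rtoaje case $1/e\ge a\ge b>0$, so $g(1/e)\ge 0$, and $a=1$ falls into the termwise-trivial case above, so $g(1)\ge 0$; moreover $g(b)=0$ and $g'(b)=0$. Hence any violation $g<0$ must come from an interior minimum $a^{\ast}\in(1/e,1)$ at which the stationarity condition
\begin{equation*}
g'(a^{\ast})=r(a^{\ast})^{ra^{\ast}}(\ln a^{\ast}+1)-rb\,(a^{\ast})^{rb-1}-rb^{ra^{\ast}}\ln b=0
\end{equation*}
holds. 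The plan is to use this identity to eliminate the awkward factor $b^{ra^{\ast}}\ln b$ from $g(a^{\ast})$ and reduce to a univariate transcendental inequality in a single auxiliary parameter such as $t:=-rb\ln b\in(0,1]$ (recall $-x\ln x\le 1/e$ on $(0,1)$). As a parallel route, I would also fix $(a,b)\in\mathcal{D}$ and treat $F$ as a function of $r$: one has $F(a,b,0)=0$ and $\partial_{r}F(a,b,0)=(a-b)(\ln a-\ln b)\ge 0$, so it suffices to control the (finitely many) sign changes of $\partial_{r}F$ on $(0,e]$.

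The main obstacle is the sharpness of the threshold $r=e$. At $(a,b,r)=(1/e,1/e,e)$ the quantity $F$ vanishes together with several of its first derivatives, and the function $x\mapsto x^{rx}$ changes concavity at $x=1/e$ precisely when $r=e$; consequently no loose termwise estimate can succeed in the limit $a\downarrow 1/e$, $b\uparrow 1/e$, $r\uparrow e$. I expect the hardest step to be producing an estimate inside $\mathcal{D}$ that remains sharp along this codimension-one critical locus, which will likely require either a careful Taylor expansion around $(1/e,1/e)$ complemented by a separate global bound for $(a,b)$ away from that corner, or an algebraic rearrangement that isolates the factor $1+\ln x$ (which vanishes at $x=1/e$) and thereby renders the sign of $F$ transparent.
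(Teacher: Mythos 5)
Your reduction of the problem is correct as far as it goes: the termwise argument for $a\ge 1\ge b$ is valid, the appeal to the previously known regimes $a\ge b\ge 1/e$ and $1/e\ge a\ge b>0$ is legitimate, and the endpoint observations $g(1/e)\ge 0$ and $g(1)\ge 0$ are right (though since $b<1/e$, the points where $g(b)=g'(b)=0$ lie outside the interval $[1/e,1]$ you study and contribute nothing). The genuine gap is that everything after this reduction is a plan, not a proof. The region $\mathcal{D}=\{0<b<1/e<a<1\}$ is precisely the part of the theorem that was left open in the literature, and your argument stops exactly there: the elimination of the term $b^{ra^{\ast}}\ln b$ at a putative interior minimum is announced but never carried out, no univariate inequality in $t=-rb\ln b$ is actually stated, let alone proved, and the alternative route via $\partial_r F$ records only the sign of $\partial_r F$ at $r=0$ with no mechanism for bounding the number or location of sign changes of $\partial_r F$ on $(0,e]$. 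You yourself label the decisive estimate as the ``hardest step'' still to be produced; consequently the proposal does not establish the inequality \eqref{eq:cirtioaje_main} on $\mathcal{D}$, which is the entire content of the theorem beyond what was already known.

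For comparison, the paper does not isolate $\mathcal{D}$ at all; it treats every configuration through the auxiliary function $f(u)=u^{s}-u-\gamma^{s}+\gamma$ with $u=a^{rb}$, $\gamma=b^{rb}$, $s=a/b>1$, for which $a^{ra}-a^{rb}-b^{ra}+b^{rb}=f(u)-f(\gamma)$ and $f$ is increasing on $\,]g(s),\infty[\,$ with $g(s)=e^{-\ln s/(s-1)}$. The whole case $1>a>b>0$ is thereby reduced to the single explicit estimate $\gamma>g(s)$, which the authors attack by combining the bound $-rb\ln b\le r/e\le 1$ with Napier's inequality $1/a<(\ln a-\ln b)/(a-b)$. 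Note, however, that $\gamma>g(s)$ unwinds to $-r\ln b<(\ln a-\ln b)/(a-b)$, which is a genuinely sharp two-variable inequality (it is not a consequence of $-rb\ln b\le 1$ alone when $b$ is small), so even along that route the concluding estimate requires real care. The lesson for your write-up is the same either way: a completed proof must exhibit one concrete, verifiable inequality valid throughout $\mathcal{D}$ up to $r=e$; identifying the critical locus near $(a,b,r)=(1/e,1/e,e)$ and predicting that a Taylor expansion ``will likely'' succeed is a reasonable research plan but is not a substitute for that estimate.
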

\begin{theorem}
\label{teo:conjecture4.4}
The inequality \eqref{main:inequality} holds, for all
positive real numbers $a,b,c$ and for all $r\in [0,e]$.
\end{theorem}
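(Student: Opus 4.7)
The plan is to deduce the three-variable cyclic inequality from the two-variable Theorem~\ref{teo:cirtoaje} by an appropriate decomposition, combined with a case analysis on the location of $(a,b,c)$ relative to the thresholds $1/e$ and $1$. Since the inequality is cyclic (not fully symmetric), one may assume without loss of generality that $a=\min\{a,b,c\}$, which leaves two sub-orderings $a\le b\le c$ and $a\le c\le b$ corresponding to the two cyclic directions; both must be handled in parallel. Writing
$$D(a,b,c):=a^{ra}+b^{rb}+c^{rc}-a^{rb}-b^{rc}-c^{ra},$$
any of the degenerate cases $a=b$, $b=c$ or $a=c$ reduces $D$ to an expression of the form $x^{rx}+y^{ry}-x^{ry}-y^{rx}$ already covered by Theorem~\ref{teo:cirtoaje}; so one may restrict to the strict ordering $a<b<c$.

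The central step is to subtract from $D$ the non-negative quantity furnished by Theorem~\ref{teo:cirtoaje} applied to a carefully chosen pair; the three choices leave the residuals
$$R_{ab}=b^{ra}+c^{rc}-b^{rc}-c^{ra},\qquad R_{bc}=a^{ra}+c^{rb}-a^{rb}-c^{ra},\qquad R_{ac}=b^{rb}+a^{rc}-a^{rb}-b^{rc},$$
only one of which need be verified non-negative to close the argument. No single choice works throughout the region: e.g.\ $R_{ac}$ is already negative at $(a,b,c,r)=(1/2,9/10,19/20,e)$, whereas both $R_{ab}$ and $R_{bc}$ are positive there. The plan is therefore to partition $\{a<b<c\}$ according to where each variable sits with respect to $1/e$ and $1$, and in each sub-region to exhibit a pair whose residual is non-negative by analysing the monotonicity of the one-variable function $\psi(x)=x^{rp}-x^{rq}$ on the interval of interest: its unique positive critical point is $x_0=(p/q)^{1/(r(q-p))}$, and the hypothesis $r\le e$, via the classical bound $x\ln x\ge -1/e$, places $x_0$ where the desired residual inequality follows from monotonicity.

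The principal obstacle is the mixed regime where the triple straddles both $1/e$ and $1$: there $\psi$ is non-monotone on the interval of interest, and none of the three direct residuals is immediately tractable. In that regime I anticipate needing either a finer splitting of $D$, e.g.\ inserting an intermediate point such as $1$ so that $c^{rc}-c^{ra}=(c^{rc}-c^{r})+(c^{r}-c^{ra})$ and each piece is handled by Theorem~\ref{teo:cirtoaje} applied to the degenerate pair $(c,1)$, or a perturbation argument that freezes two of the three variables and tracks the sign of $D$ as the remaining one crosses its critical points. The reverse cyclic direction $a\le c\le b$ is then obtained by relabelling, though the dominant pair typically differs from that of the forward direction.
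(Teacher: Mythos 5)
Your overall strategy coincides with the paper's: write the cyclic difference $D$ as a non-negative two-variable block supplied by Theorem~\ref{teo:cirtoaje} plus a residual of the form $x^{rp}+y^{rq}-x^{rq}-y^{rp}$, choose which pair to peel off according to the position of $(a,b,c)$ relative to $1$ and $1/e$, and prove the chosen residual non-negative by a monotonicity analysis of $t\mapsto t^{rp}-t^{rq}$; the paper does exactly this through the function $f(t)=t^s-t-\gamma^s+\gamma$ of Proposition~\ref{main:prop}, with the critical point encoded by $g(s)$. Your observation that no single residual works everywhere (with the numerical witness at $(1/2,9/10,19/20,e)$ showing $R_{ac}<0$) is correct, and the paper's own split agrees with it: it uses $R_{bc}$ in case (b$_3$) (largest variable $\ge 1$) and $R_{ab}$ in case (c$_3$) (all variables below $1$).

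The gap is that the decisive case is not actually proved. When $0<c<b<a<1$ you concede that the relevant one-variable function is non-monotone on the interval of interest and only anticipate needing ``either a finer splitting or a perturbation argument''; neither is carried out, and the one concrete device you propose, inserting the point $1$ via $c^{rc}-c^{ra}=(c^{rc}-c^{r})+(c^{r}-c^{ra})$, fails because the second piece is negative: for $c<1$ and $a<1$ one has $ra<r$, hence $c^{ra}>c^{r}$. The bound $x\ln x\ge -1/e$ alone is also insufficient in this regime: what must be shown is $c^{rc}>g(a/c)$, equivalently $-r\ln c<\frac{\ln a-\ln c}{a-c}$, and Napier's inequality only bounds the right-hand side below by $1/a$, which does not dominate the available estimate $-r\ln c\le 1/c$ when $c<a$. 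The paper closes this case by a further partition of the parameter set according to whether $c\gtrless (r-1)/r$ and by introducing two auxiliary functions, $m(z)=zc^{rz}-c^{rc+1}$ and $q(z)=c^{(1-r)z}z^{c}-c^{c+c(1-r)}$, whose non-negativity on $[c,1]$ delivers precisely the missing bound $\gamma>g(s)$ in each sub-regime. Without an equivalent of this step, your argument does not go through in the very region you identify as the principal obstacle.
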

\begin{theorem}
\label{teo:conjecture2.1}
The inequality 
\begin{eqnarray}
 2\sqrt{a^{ra}b^{rb}}\ge a^{rb}+b^{ra}
 \label{eq:main_raices}
\end{eqnarray}
holds, for all
positive real numbers $a,b$ and for all $r\in [0,e]$.
\end{theorem}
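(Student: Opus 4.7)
The plan is to reduce \eqref{eq:main_raices} to an equivalent scalar inequality between an exponential and a hyperbolic cosine, analyze it by a second-order expansion at the equality point $a=b$, and finally globalize. By the $(a,b)\leftrightarrow(b,a)$ symmetry of \eqref{eq:main_raices}, I assume $a\ge b>0$, the case $a=b$ being trivial equality. Setting $P:=rb\ln a$ and $Q:=ra\ln b$, the identity $e^{P}+e^{Q}=2e^{(P+Q)/2}\cosh\bigl((P-Q)/2\bigr)$, combined with
\[
\frac{r(a\ln a+b\ln b)}{2}-\frac{P+Q}{2}=\frac{r(a-b)(\ln a-\ln b)}{2}=:L(a,b,r)\ge 0,
\]
transforms \eqref{eq:main_raices} into the equivalent form
\begin{equation*}
\cosh\!\Bigl(\tfrac{r(b\ln a-a\ln b)}{2}\Bigr)\;\le\;e^{L(a,b,r)}.\tag{$\star$}
\end{equation*}
Both sides of $(\star)$ equal $1$ at $a=b$, so the entire task is to control how fast the two sides separate as $a$ moves away from $b$. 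Observe also that the exponent $L$ is, up to the factor $r/2$, precisely the C{\^{\i}}rtoaje-type gap $(a-b)(\ln a-\ln b)\ge 0$ that drives Theorem~\ref{teo:cirtoaje}, so $(\star)$ is genuinely a quantitative refinement of Theorem~\ref{teo:cirtoaje}.

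With $b,r$ fixed, set $\Phi(a):=L(a,b,r)-\log\cosh\bigl(\tfrac{r}{2}(b\ln a-a\ln b)\bigr)$. A direct computation yields $\Phi(b)=\Phi'(b)=0$ and
\[
\Phi''(b)\;=\;\frac{r}{4b}\Bigl(4\;-\;rb\,(1-\ln b)^{2}\Bigr).
\]
Nonnegativity of $\Phi''(b)$ is therefore equivalent to $rb(1-\ln b)^{2}\le 4$, and a short calculus exercise shows that $b\mapsto b(1-\ln b)^{2}$ attains its global maximum $4/e$ at the point $b=1/e$. Combined with $r\le e$, this gives $\Phi''(b)\ge 0$, with equality holding precisely at $(r,b)=(e,1/e)$; this is exactly why the constant $e$ in the hypothesis is sharp.

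The main obstacle is upgrading this local convexity of $\Phi$ at $a=b$ to the global estimate $\Phi(a)\ge 0$ on the whole admissible range. I would attack this by differentiating $(\star)$ in the parameter $r$ at fixed $a,b$, reducing the problem to the critical value $r=e$, and then partitioning the $(a,b)$-plane into subregimes according to the sign of $b\ln a-a\ln b$, whose zero set is governed by the monotonicity of $x\mapsto (\ln x)/x$ (increasing on $(0,e)$, decreasing on $(e,\infty)$). In each subregime one combines the elementary bound $\cosh t\le e^{t^{2}/2}$ with an appeal to Theorem~\ref{teo:cirtoaje} for the associated companion C{\^{\i}}rtoaje-type difference. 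The most delicate subregime is the one where $a$ and $b$ straddle the critical value $1/e$, i.e.\ the regime $1>a>1/e>b>0$ originally left open by C{\^{\i}}rtoaje in \cite{cirtoaje}, and it is precisely here that the sharp constant $e$ enters in an essential way.
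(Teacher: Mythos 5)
Your reduction of \eqref{eq:main_raices} to the form $\cosh\bigl(\tfrac{r}{2}(b\ln a-a\ln b)\bigr)\le e^{L}$ with $L=\tfrac{r}{2}(a-b)(\ln a-\ln b)$ is correct, and your local computation at $a=b$ is right: I verify $\Phi''(b)=\tfrac{r}{4b}\bigl(4-rb(1-\ln b)^{2}\bigr)$, and the identification of the borderline case $(r,b)=(e,1/e)$ is a genuinely nice observation that the paper's own argument does not make explicit. But as written this is not a proof: everything after ``The main obstacle'' is a plan, and the plan does not survive scrutiny. The bound $\cosh t\le e^{t^{2}/2}$ asks for $\tfrac{r^{2}}{8}(b\ln a-a\ln b)^{2}\le \tfrac{r}{2}(a-b)(\ln a-\ln b)$, which is false in exactly the regime you call most delicate: for $a=1$, $b=e^{-2}$, $r=e$ the left side is $e^{2}/2\approx 3.69$ while the right side is $e(1-e^{-2})\approx 2.35$, so this tool overshoots the target $e^{L}\approx 10.5$ (it gives $\approx 40$) even though $(\star)$ itself holds there ($\cosh(e)\approx 7.6$). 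Likewise, ``differentiating in $r$ to reduce to $r=e$'' presumes a monotonicity that fails: since $\partial_r\Phi=\tfrac12(a-b)(\ln a-\ln b)-\tfrac12(b\ln a-a\ln b)\tanh(\cdot)$ is strictly positive near $r=0$, $\Phi$ is not decreasing in $r$. (Concavity of $\Phi$ in $r$ --- linear term minus the convex function $\log\cosh$ --- would rescue the reduction to the endpoints $r\in\{0,e\}$, but you neither state nor use this.) Finally, your claim that $b\mapsto b(1-\ln b)^{2}$ has global maximum $4/e$ is only true on $]0,e]$; the function tends to infinity with $b$, and indeed $\Phi''(b)<0$ for large $b$, which shows the inequality actually fails near the diagonal for large arguments (e.g.\ $a=101$, $b=100$, $r=1$) and must be restricted to $a,b\in\,]0,1]$ as in C{\^\i}rtoaje's original conjecture.

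For comparison, the paper avoids the two-variable geometry altogether: it fixes $b\in\,]0,1]$, sets $H(x)=2\sqrt{x^{rx}b^{rb}}-b^{rx}-x^{rb}$ on $]0,1]$, and shows $x=b$ is the unique critical point and a global minimum by writing $H'(x)=r[K(x)-Q(x)]$ with $K(x)=\sqrt{x^{rx}b^{rb}}(\ln x+1)$ strictly increasing, comparing $K$ and $Q$ at $0^{+}$ and at $1$, and concluding $H'<0$ on $]0,b[$ and $H'>0$ on $]b,1]$. If you want to salvage your route, the missing piece is a global lower bound for $e^{L}-\cosh\bigl(\tfrac r2(b\ln a-a\ln b)\bigr)$ on all of $]0,1]^{2}\times[0,e]$, not just a second-order statement at the diagonal; without it the argument is incomplete.
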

\begin{theorem}
\label{teo:conjecture2.2}
Let $n\in\mathbb{N}$ and $x_i\in ]0,1]^n$.
Then, the inequality
 \begin{eqnarray}
 n\prod_{i=1}^{n}x_i^{x_i}\ge \sum_{i=1}^{n}\Big(\prod_{j=1}^{n}x_j\Big)^{x_i}
 \label{ineq:conjecture2.2}
\end{eqnarray}
holds.
\end{theorem}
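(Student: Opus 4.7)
The natural approach is induction on $n$, taking advantage of the full symmetry of the inequality under permutations of the $x_i$ so that we may assume $0 < x_1 \le x_2 \le \cdots \le x_n \le 1$. The base case $n = 1$ is a trivial equality, and for $n = 2$, after writing the inequality as $2 a^a b^b \ge a^a b^a + a^b b^b$ and assuming $a \le b$, one can reduce it to $2 b^{b-a} \ge 1 + (ab)^{b-a}$, which I would prove by a direct study of the one-variable function $t\mapsto 2b^t - 1 - (ab)^t$ using elementary calculus and the endpoint values at $t=0$ and $t=1-a$.

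For the inductive step $(n-1)\Rightarrow n$, the first move is to normalize (\ref{ineq:conjecture2.2}) by dividing by $Q:=\prod_j x_j^{x_j}$ and using the identity $(\prod_j x_j)^{x_i}/Q = \prod_{j} x_j^{x_i-x_j}$, which yields the equivalent statement
\begin{equation*}
\sum_{i=1}^n \prod_{j=1}^n x_j^{x_i - x_j} \le n.
\end{equation*}
Restrict first to the boundary slice $x_n = 1$: then $Q$ and $P:=\prod_j x_j$ collapse to $Q_{n-1}:=\prod_{j<n}x_j^{x_j}$ and $P_{n-1}:=\prod_{j<n} x_j$, and the desired bound becomes $n Q_{n-1} \ge \sum_{i<n} P_{n-1}^{x_i} + P_{n-1}$. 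By the induction hypothesis the sum on the right is bounded by $(n-1)Q_{n-1}$, and the remaining term $P_{n-1}$ is absorbed using the elementary observation $Q_{n-1}\ge P_{n-1}$ (equivalent to $\prod_{j<n} x_j^{x_j-1}\ge 1$, which is clear since $x_j\in(0,1]$ forces $x_j^{x_j-1}\ge 1$). Hence the inequality holds on the slice $x_n=1$.

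The principal obstacle is then to upgrade from this single slice to the whole range $x_n\in(0,1]$. My plan is to view $g(t):= nQ - \sum_i P^{x_i}$ (with all $x_j$, $j<n$, frozen and $x_n=t$) as a one-variable function and prove that its minimum on $(0,1]$ is attained at $t=1$. Computing $g'(t)$ produces a mixture of logarithmic and power-exponential terms, and I expect controlling its sign to require repeated invocation of the $(n-1)$-variable hypothesis applied to sub-multisets of $\{x_1,\ldots,x_{n-1}\}$, combined with a delicate termwise comparison that isolates the dominating contribution. This monotonicity step is where I anticipate the bulk of the technical work to lie; if a direct monotonicity argument proves intractable, the backup plan is a convexity/tangent-line analysis of $g$ around $t=1$ coupled with a separate treatment of the small-$t$ regime, since the behaviour as $t\to 0^+$ is dominated by the single term $P^{t}\to 1$ against $nQ\to nQ_{n-1}$.
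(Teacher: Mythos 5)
Your proposal is not a proof: the entire inductive step is reduced to the claim that $g(t)=nQ-\sum_i P^{x_i}$ (with $x_1,\dots,x_{n-1}$ frozen and $x_n=t$) attains its minimum on $]0,1]$ at $t=1$, and you explicitly leave that claim unproved, offering only a plan and a backup plan for attacking it. The slice computation at $x_n=1$ is fine --- the induction hypothesis together with $Q_{n-1}\ge P_{n-1}$ does settle that slice --- but it is the easy part, and everything else hangs on the missing monotonicity statement.

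Worse, that statement is false, and your own remark about the small-$t$ regime already contains the reason. As $t\to 0^+$ one has $Q\to Q_{n-1}$, $P=tP_{n-1}\to 0$, each $P^{x_i}$ with $i<n$ tends to $0$, and $P^{t}\to 1$, so $g(0^+)=nQ_{n-1}-1$. Nothing forces this to be nonnegative: with $x_1=\cdots=x_{n-1}=1/e$ one gets $Q_{n-1}=e^{-(n-1)/e}$ and $n e^{-(n-1)/e}<1$ already for $n=6$. So the minimum of $g$ is not at $t=1$; in fact $g$ becomes negative, i.e.\ the inequality \eqref{ineq:conjecture2.2} itself fails for $n\ge 6$ (for instance $n=6$, $x_1=\cdots=x_5=1/e$, $x_6=0.01$ gives a left-hand side of about $0.91$ against a right-hand side of about $1.05$). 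No inductive scheme can bridge that, so the ``principal obstacle'' you identify is genuinely insurmountable in the stated generality. For comparison, the paper slices the other way: it fixes $x_n=c$, treats the left-over quantity as a function of the remaining $n-1$ variables, and argues via a critical-point and Hessian computation that the global minimum sits on the diagonal $(c,\dots,c)$; but it only carries out the details for $n=3$, and your own limit computation shows that any argument claiming the result for every $n$ must break down somewhere.
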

\noindent
Note that the conjectures 4.4, 2.1 and 2.2
are solved by the Theorems~\ref{teo:conjecture4.4}, 
\ref{teo:conjecture2.1}, \ref{teo:conjecture2.2},
respectively.
Moreover, we develop a proof of Theorem~\ref{teo:cirtoaje}
which is an alternative proof of \eqref{eq:cirtioaje_main}
for all positive real numbers $a$, $b$ and
$r\in [0,e]$, which is distinct to the existing proofs given in 
\cite{manyama_2010,cirtoaje_2011}.

The rest of the paper is organized in two sections: In section~\ref{sec:main_res}
we present the proofs of Theorems~\ref{teo:cirtoaje}, \ref{teo:conjecture4.4},
\ref{teo:conjecture2.1} and \ref{teo:conjecture2.2}
and in  section~\ref{sec:extensions} we present some remarks
and three new conjectures.

\section{Proofs of main results}
\label{sec:main_res}

In this section we present the proofs of 
Theorems~\ref{teo:cirtoaje}, \ref{teo:conjecture4.4},
\ref{teo:conjecture2.1}, \ref{teo:conjecture2.2}.
Firstly, we recall a result of \cite{corhuanc}.
Then, we present the corresponding proofs.

\subsection{A preliminar result}
For completeness and self-contained structure of the proofs
of Theorems~\ref{teo:cirtoaje} and \ref{teo:conjecture4.4},
we  need the following result of \cite{corhuanc}.

\begin{proposition}
\label{main:prop}
Consider $s\in \mathbb{R}^+$ with $s\not=1$, $m\in \mathbb{R}^+$
and $f,g:\mathbb{R}^+\to\mathbb{R}$
defined as follows
\begin{eqnarray*}
f(t)=t^s-t-\gamma^s+\gamma,
\quad
\mbox{and}
\quad
g(t)=
\left\{
  \begin{array}{ll}
    e^{-\ln(t)/(t-1)}, & \hbox{$t\not\in\{0,1\}$,} \\
    e^{-1}, & \hbox{$t=1$,}\\
    0, & \hbox{$t=0$.}\\
  \end{array}
\right.
\end{eqnarray*}
Then, the following properties are satisfied
\begin{enumerate}
\item[(i)] $f(\gamma)=0$ and $f(0)=f(1)=-\gamma^s+\gamma.$
\item[(ii)] If $s>1$,  $f$ is  strictly increasing  on $]g(s),\infty[$
and  strictly decreasing  on $]0,g(s)[.$
\item[(iii)] If $s\in ]0,1[$,  $f$ is  strictly decreasing  on $]g(s),\infty[$
and  strictly increasing  on $]0,g(s)[.$
\item[(iv)]   $g$ is continuous on   $\mathbb{R}^+\cup\{0\}$
and strictly increasing  on $\mathbb{R}^+$. Furthermore $y=1$
is a horizontal asymptote of $y=g(t).$
\end{enumerate}
\end{proposition}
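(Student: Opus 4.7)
The plan is to dispatch the four items in turn, using only elementary calculus on the two explicitly-given functions and a single auxiliary inequality. For (i) I would simply substitute $t=\gamma$, $t=0$ and $t=1$ into the formula $f(t)=t^{s}-t-\gamma^{s}+\gamma$; the three identities drop out at once.

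For (ii) and (iii) the plan is to differentiate once. Since $f'(t)=st^{s-1}-1$, the equation $f'(t)=0$ on $\mathbb{R}^{+}$ admits the unique solution $t^{\star}=s^{1/(1-s)}$, and the algebraic rewriting $s^{1/(1-s)}=\exp(-\ln(s)/(s-1))$ identifies $t^{\star}$ with $g(s)$. The sign of $f'$ on either side of $g(s)$ is then controlled by the monotonicity of $t\mapsto st^{s-1}$: this map is strictly increasing when $s>1$, which yields the pattern claimed in (ii), and strictly decreasing when $s\in\,]0,1[$, which yields the reversed pattern claimed in (iii). No second-derivative argument is needed.

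For (iv) I write $g(t)=\exp(h(t))$ with $h(t)=-\ln(t)/(t-1)$. Continuity at $t=1$ follows from the classical limit $\lim_{t\to 1}\ln(t)/(t-1)=1$ (via L'H\^opital or a Taylor expansion), giving $g(1)=e^{-1}$. Continuity at $t=0$ follows from $\ln(t)/(t-1)\to+\infty$ as $t\to 0^{+}$, so that $g(0^{+})=0$. The horizontal asymptote $y=1$ is the immediate consequence of $\ln(t)/(t-1)\to 0$ as $t\to\infty$.

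The main obstacle, and the only genuinely nontrivial step, is the strict monotonicity of $g$ on $\mathbb{R}^{+}$. Since $\exp$ is strictly increasing, this reduces to proving that $h$ is strictly increasing, and a direct computation of the derivative gives
\[
h'(t)=\frac{\ln(t)+1/t-1}{(t-1)^{2}}\qquad\text{for }t\ne 1.
\]
Thus it suffices to show that $\phi(t):=\ln(t)+1/t-1>0$ for every $t>0$ with $t\ne 1$. This is a classical one-line estimate: $\phi'(t)=(t-1)/t^{2}$, so $\phi$ attains its unique minimum at $t=1$ with $\phi(1)=0$, whence $\phi>0$ elsewhere. Together with continuity of $h$ at $t=1$, this yields the strict monotonicity of $h$, and hence of $g$, on the whole of $\mathbb{R}^{+}$.
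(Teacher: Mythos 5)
Your proof is correct. Note that the paper itself does not prove Proposition~\ref{main:prop} at all --- it is recalled without proof from the earlier work \cite{corhuanc} --- so there is no internal argument to compare against; your derivation (the critical point $t^{\star}=s^{1/(1-s)}=g(s)$ of $f'$, and the reduction of the monotonicity of $g$ to the elementary inequality $\ln t+1/t-1>0$ for $t\neq 1$) is exactly the natural elementary route and fully establishes the statement, modulo the harmless facts that the hypothesis ``$m\in\mathbb{R}^{+}$'' is never used and $\gamma$ must be read as an arbitrary fixed element of $\mathbb{R}^{+}$.
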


\subsection{Proof of Theorem~\ref{teo:cirtoaje}}

Without loss of generality, we assume that $a>b.$ Indeed,
we follow the proof \eqref{eq:cirtioaje_main}
by application of Proposition~\ref{main:prop} with $t=a^{rb}, \gamma=b^{rb}$ and
$s=a/b.$ Indeed, we distinguish three cases
\begin{itemize}
  \item[(a$_2$)] {\bf Case} $a>b>1$ \Big($t>\gamma>1$ and $s>1$\Big).
  By Proposition~\ref{main:prop}-(iv),
  we note that $g(s)<1$. Then, by
  the strictly increasing behavior of $f$
  (Proposition~\ref{main:prop}-(ii)) we deduce the inequality since:
 \begin{eqnarray*}
 t>\gamma>1>g(s),\quad s>1
 \qquad
 \Rightarrow
 \qquad
 f(t)=a^{ra}- a^{rb}-b^{ra}+b^{rb}>f(\gamma)=0.
 \end{eqnarray*}

  \item[(b$_2$)] {\bf Case} $a>1\ge b$ \Big($t>1\ge \gamma$ and $s>1$\Big).  For
   $\gamma\in [g(s),1]$, we follow the inequality
  by almost identical arguments to that used before in (i),
  since  $t>1\ge \gamma\ge g(s)$ and $s>1$. Otherwise,
  if  $\gamma\in [0,g(s)]$, we deduce that
  \begin{eqnarray*}
 f(t)=a^{ra}- a^{rb}-b^{ra}+b^{rb}>f(1)=f(0)>f(\gamma)=0,
 \end{eqnarray*}
 which implies the desired inequality.

  \item[(c$_2$)] {\bf Case} $1>a>b>0$ \Big($1>t>\gamma>0$ and $s>1$\Big).
 First, we define $h:[0,1]\to\mathbb{R}$ by
 the correspondence rule $h(t)=-rt\ln t$ for $t>0$
 and $h(0)=0$. The function $h$ is concave and has a maximum
 at $(1/e,r/e)$. Thus, we deduce that
 \begin{eqnarray}
 -rb\ln b<1, \quad\mbox{for all $b\in[0,1]$ and $r\in [0,e]$}.
 \label{eq:tlnt}
 \end{eqnarray}
 Secondary,
 by the Napier's inequality \cite{napier}:
 \begin{eqnarray}
0<b<a\qquad
\Rightarrow\qquad
\frac{1}{a}<\frac{\ln a-\ln b}{a-b}<\frac{1}{b}.
\label{eq:napier}
\end{eqnarray}
From \eqref{eq:tlnt} and \eqref{eq:napier} we have that
\begin{eqnarray*}
-rb\ln b\le 1\le \frac{1}{a}<\frac{\ln a-\ln b}{a-b},
\end{eqnarray*}
which implies $\gamma>g(s).$ The proof of this case is completed by application
of Proposition~\ref{main:prop}-(ii).
\end{itemize}
Hence, by (a$_2$), (b$_2$) and (c$_2$) we follow that
Theorem~\ref{teo:cirtoaje} is valid.

\subsection{Proof of Theorem~\ref{teo:conjecture4.4}}

The proof of this theorem is again 
developed by application of Proposition~\ref{main:prop}.
Firstly, we recall the notation of \cite{corhuanc}:
\begin{eqnarray*}
\mathbb{R}^3_+&=&\{(a,b,c)\in \mathbb{R}^3\quad/\quad a>0,\quad
b>0\quad\mbox{and}\quad c>0\}
\\
\mathbb{E}_1&=&\Big\{(a,b,c)\in \mathbb{R}^3_+\quad/\quad\mbox{ $a=b=c$
or $a=b\not=c$ or $a\not=b=c$ }\Big\},
\\
\mathbb{E}^+_a&=&\Big\{(a,b,c)\in \mathbb{R}^3_+\quad/\quad\mbox{ $a\ge 1$
and $a>\max\{b,c\}$ }\Big\},
\\
\mathbb{E}^-_a&=&\Big\{(a,b,c)\in \mathbb{R}^3_+\quad/\quad\mbox{
$1>a>\max\{b,c\}$ }\Big\},
\\
\mathbb{E}^+_b&=&\Big\{(a,b,c)\in \mathbb{R}^3_+\quad/\quad\mbox{ $b\ge 1$
and $b>\max\{a,c\}$ }\Big\},
\\
\mathbb{E}^-_b&=&\Big\{(a,b,c)\in \mathbb{R}^3_+\quad/\quad\mbox{
 $1>b>\max\{a,c\}$ }\Big\},
\\
\mathbb{E}^+_c&=&\Big\{(a,b,c)\in \mathbb{R}^3_+\quad/\quad\mbox{ $c\ge 1$
and $c>\max\{a,b\}$ }\Big\}
\quad\mbox{and}
\\
\mathbb{E}^-_c&=&\Big\{(a,b,c)\in \mathbb{R}^3_+\quad/\quad\mbox{
$1>c>\max\{a,b\}$ }\Big\}.
\end{eqnarray*}
The family
$\Big\{\mathbb{E}_1,
\mathbb{E}^+_a,\mathbb{E}^-_a,
\mathbb{E}^+_b,\mathbb{E}^-_b,
\mathbb{E}^+_c,\mathbb{E}^-_c\Big\}$
is a set partition of
$\mathbb{R}^3_+$.
Now, with this notation, we subdivide the proof in three parts:
\begin{itemize}
\item[(a$_3$)] {\bf Case $(a,b,c)\in \mathbb{E}_1$}.
This special case is a direct consequence of Theorem~\ref{teo:cirtoaje}.

\item[(b$_3$)] {\bf Case $(a,b,c)\in \mathbb{E}^+_a\cup \mathbb{E}^+_b\cup \mathbb{E}^+_c$}.
If  $(a,b,c)\in \mathbb{E}^+_a$, we apply the Theorem~\ref{teo:cirtoaje}
 and
Proposition~\ref{main:prop} as follows.
We select $t=a^{rb},\gamma=c^{rb}$ and $s=a/b,$
the monotonic behavior and properties of function $f$,
defined on Proposition~\ref{main:prop},  implies that
\begin{eqnarray}
a^{ra}+c^{rb}>a^{rb}+c^{ra},
\label{casoii:amax:1}
\end{eqnarray}
since $t>\gamma,$ $t>1$ and $s>1$. Indeed, the corresponding proof of
\eqref{casoii:amax:1} needs the distinction of  two cases:
$c\ge 1$ and $c<1$. 
If $c\ge 1$, then $\gamma>1$ and $\gamma\in\,\, ]g(s),\infty[$,
so $f$ is strictly increasing and $t>\gamma$ implies \eqref{casoii:amax:1}.
For $c<1$, we note that
$\gamma<1$ and $-\gamma^s+\gamma\ge 0$ since $s>1$ and
$1\in ]g(s),\infty[$,
then the assumption $t>1$ implies  that $f(t)>f(1)=-\gamma^s+\gamma\ge 0=f(\gamma)$
and  \eqref{casoii:amax:1} is again true for this subcase.
Moreover, for $(a,b,c)\in \mathbb{E}^+_a\subset\mathbb{R}^3_+,$
by Theorem~\ref{teo:cirtoaje},
we recall that the inequality
\begin{eqnarray}
c^{rc}+b^{rb}>b^{rc}+c^{rb},
\label{casoii:amax:2}
\end{eqnarray}
holds true for all $r\in [0,e]$.
Adding \eqref{casoii:amax:1} and \eqref{casoii:amax:2}
we deduce~\eqref{main:inequality}.

The proof  for $(a,b,c)\in \mathbb{E}^+_b\cup \mathbb{E}^+_c$ is similar to the
case $(a,b,c)\in \mathbb{E}^+_a$ and we omit the details. 
However, we comment that for
$(a,b,c)\in \mathbb{E}^+_b$ we choose $t=b^{rc},\gamma=c^{2c}$ and $s=b/c$ and
for $(a,b,c)\in \mathbb{E}^+_c$ we select $t=c^{ra},\gamma=b^{ra}$ and $s=c/a.$

\item[(c$_3$)] {\bf Case $(a,b,c)\in \mathbb{E}^-_a\cup \mathbb{E}^-_b\cup \mathbb{E}^-_c$}.
Without loss of generality, we
assume that $(a,b,c)\in \mathbb{E}^-_a$ is such that $c<b<a$, since
the proof for $b<c<a$ is similar. 
We note that $\Omega=[0,e]\times [0,1]$ can be partitioned in the two sets
\begin{eqnarray*}
 \Omega_1&=&\Big\{(r,c)\in\Omega\quad:\quad c\in [(r-1)r^{-1},1]\Big\}
 \quad\mbox{and}
 \\
   \Omega_2&=&\Big\{(r,c)\in\Omega\quad:\quad c\in [0,(r-1)r^{-1}]\Big\}.
\end{eqnarray*}
Now, we continue the proof by distinguish 
the following two subcases: $(r,c)\in\Omega_1$
and $(r,c)\in\Omega_2.$

For the subcase $(r,c)\in\Omega_1$,
we apply the function $f$ given on Proposition~\ref{main:prop}
with $t=b^{rc},$ $\gamma=c^{rc}$ and $s=a/c$  to prove
\begin{eqnarray}
b^{ra}+c^{rc}>b^{rc}+c^{ra}
\quad
\mbox{for $0<c<b<a<1$ and $(r,c)\in \Omega_1$.}
\label{casoiii:amax:1}
\end{eqnarray}
Indeed, we firstly note that
the function $m:[c,1]\to \mathbb{R}$ defined as follows 
$m(z)=zc^{rz}-c^{rc+1}$ has the following properties:
\begin{enumerate}
\item[(m$_a$)] $m(c)=0$; 
\item[(m$_b$)] $m(1)=c^r(1-c^{rc+1-r})\ge 0$ for all
$(r,c)\in\Omega_1$ since $c>(r-1)/r$; and
\item[(m$_c$)] $m$ has a maximum at $z_{\max}=-1/r\ln c$, since
the first and second derivatives of $m$ are given by 
$m'(z)=c^{rz}(1+rz\ln c)$ and $m''(z)=c^{rz}(2r+rz\ln c)\ln c$
and naturally $m'(z_{\max})=0$ and $m''(z_{\max})<0$.
\end{enumerate}
Moreover,
we notice that $z_{\max}\ge c$ is equivalently to $1>-rc\ln c$, which is true
for $r\in [0,e]$ and $c\in [0,1]$, see the proof of~\eqref{eq:tlnt}. Then,
by (m$_a$)-(m$_c$), we
follow that $m(z)\ge 0,$ for all $z\in [c,1]$. In particular, for $z=a$,
we have that
\begin{eqnarray}
ac^{ra}>c^{rc+1},
\quad
\mbox{for $a\in [c,1]\subset [0,1]$ and $r\in [0,e]$.}
\label{casoiii:amax:2}
\end{eqnarray}
Now, from \eqref{casoiii:amax:2}, we note that
\begin{eqnarray}
ac^{ra}>c^{rc+1}
&\Rightarrow &
c^{r(a-c)}>\frac{c}{a}
\quad\Rightarrow\quad
rc\ln c>\frac{c\ln(c/a)}{a-c}
\nonumber\\
&\Rightarrow&
c^{rc}>e^{\frac{-c\ln(a/c)}{a-c}}
\quad\Rightarrow\quad
\gamma>g(s),
\end{eqnarray}
which implies \eqref{casoiii:amax:1}
by application of  Proposition~\ref{main:prop}-(ii),
since $t>\gamma>g(s)$ and $f$ is increasing on $]g(s),\infty[$.

For the subcase $(r,c)\in\Omega_2$, 
we apply the function $f$ given on Proposition~\ref{main:prop}
with $t=b^{rc},$ $\gamma=c^{rc}$ and $s=a/c$  to prove
\begin{eqnarray}
b^{ra}+c^{rc}>b^{rc}+c^{ra}
\quad
\mbox{for $0<c<b<a<1$ and $(r,c)\in \Omega_2$.}
\label{casoiii:amax:1_w2}
\end{eqnarray}
We note that the inequality $c^{rc}>c^{r-1}$ holds true
for all $(r,c)\in \Omega_2.$ Now, in order to deduce that
$\gamma>g(s)$ is suficiently to prove that $c^{r-1}>g(s)$. Indeed, the 
function $q:[c,1]\to \mathbb{R}$ defined as follows
$q(z)=c^{(1-r)z}z^c-c^{c+c(1-r)}$ has the following properties:
\begin{enumerate}
\item[(q$_a$)] $q(c)=0$;
\item[(q$_b$)] $q(1)=c^{1-r}(1-c^{c+(c-1)(1-r)})\ge 0$ for 
all $(r,c)\in \Omega_2$, since $c\in [0,(r-1)/r]$; and
\item[(q$_c$)] $q$ is increasing in $[c,1].$
\end{enumerate}
Then,
we deduce that $q(z)\ge 0$ for all $z\in [c,1]$.
In particular, for $z=a\in [c,1]$, we deduce
that $c^{(1-r)a}a^c-c^{c+c(1-r)}\ge 0$, which implies the following sequence
of implications
\begin{eqnarray*}
c^{(1-r)a}a^c>c^{c+c(1-r)}
\quad
\Rightarrow
\quad
\frac{c^{(1-r)a}}{c^{c(1-r)}}>\frac{c^c}{a^c}
\quad
\Rightarrow
\quad
c^{1-r}>g(s).
\end{eqnarray*}
Thus \eqref{casoiii:amax:1_w2} holds true.

From  \eqref{casoiii:amax:1} and  \eqref{casoiii:amax:1_w2},
we deduce that
\begin{eqnarray}
b^{ra}+c^{rc}>b^{rc}+c^{ra}
\quad
\mbox{for $0<c<b<a<1$ and $r\in [0,e]$.}
\label{casoiii:amax:1_w3}
\end{eqnarray}
Hence, to complete the proof for $0<c<b<a<1$, 
we add the inequality \eqref{casoiii:amax:1_w3} with
$a^{ra}+b^{rb}>a^{rb}+b^{ra}$ for $r\in[0,e]$, 
which is true by Theorem~\ref{teo:cirtoaje}.

For $(a,b,c)\in \mathbb{E}^-_b\cup \mathbb{E}^-_c$ we
can follow line by line the proof of
$(a,b,c)\in \mathbb{E}^-_a$. However, we can obtain
a direct proof by apply the result obtained
for $(a,b,c)\in \mathbb{E}^-_a$ by interchanging the role of variables.
For instance, if $(a,b,c)\in \mathbb{E}^-_b$ then $(b,a,c)\in \mathbb{E}^-_a$
which implies \eqref{main:inequality}.

\end{itemize}
Hence, by (a$_3$), (b$_3$) and (c$_3$) we follow the complet proof
of Theorem~\ref{teo:conjecture4.4}.

\subsection{Proof of Theorem~\ref{teo:conjecture2.1}}
Given $b\in ]0,1]$, we define the function $H:]0,1]\to\mathbb{R}$ as follows
\begin{eqnarray*}
H(x)=2\sqrt{x^{rx}b^{rb}}-b^{rx}-x^{rb}.
\end{eqnarray*}
Then we prove that $H(x)>0$ for all $x\in ]0,1]$, which naturally implies
the inequality $2\sqrt{a^{ra}b^{rb}}\ge a^{rb}+b^{ra}$ for $x=a.$
Indeed, we prove that the function $H$ has a global
minimum at $x=b$. The fact that in $x=b$ there is a local minimum
of $H$, follows by noticing that $H'(b)=0$ and $H''(b)>0$, since
\begin{eqnarray*}
H'(x)&=&r\Bigg[\sqrt{x^{rx}b^{rb}}\Big(\ln x + 1\Big)-b^{rx}\ln b-bx^{rb-1}\Bigg]
\quad
\mbox{and}
\\
H''(x)&=&r\Bigg[\sqrt{x^{rx}b^{rb}}
\Bigg\{r\Big(\ln x + 1\Big)^2+x^{-1}\Bigg\}
-rb^{rx}(\ln b)^2-b(rb-1)x^{rb-2}\Bigg].
\end{eqnarray*}
Meanwhile, the property that $b$ is a global minimum of $H$ can be proved
by  rewriting $H'$ as the difference of two functions
and by analyzing the
sign of~$H'$ using some properties of this new functions. Indeed, to be more specific,
we note that $H'(x)=r[K(x)-Q(x)]$
for all $x\in ]0,1]$, where
the functions $K$ and $Q$ are defined as follows
\begin{eqnarray*}
K(x)=\sqrt{x^{rx}b^{rb}}\Big(\ln x + 1\Big)
\quad
\mbox{and}
\quad
Q(x)=b^{rx}\ln b+bx^{rb-1}.
\end{eqnarray*}
The functions $K$ and $Q$ have the following properties
\begin{itemize}
\item[(K$_1$)] $K$ is strictly increasing on $]0,1]$, since 
$K'(x)=\sqrt{x^{rx}b^{rb}}\Big\{r\Big(\ln x + 1\Big)^2+x^{-1}\Big\}>0,$
for all $x\in ]0,1].$
\item[(K$_2$)] $K(x)\to-\infty$ when $x\to 0^+$,
$K(1/e)=0$ and $K(1)=\sqrt{b^{rb}}$.
\item[(Q$_1$)] The derivative of $Q$ is given by 
$Q'(x)=rb^{rx}(\ln b)^2+b(rb-1)x^{rb-2},$
for all $x\in ]0,1].$ Then, in order to analize
the sign of $Q'$, we introduce the set $\Lambda=]0,1]\times]0,e]$
and a partition $\{\Lambda_1,\Lambda_2,\Lambda_3\}$ 
of~$\Lambda$, where
\begin{eqnarray*}
\Lambda_1&=&\Big\{(b,r)\in \Lambda\;:\; Q'(x)>0\;\mbox{ for all $x\in]0,1]$}\Big\}
\\
\Lambda_2&=&\Big\{(b,r)\in \Lambda\;:\; Q'(x)<0\;\mbox{ for all $x\in]0,1]$}\Big\}
\\
\Lambda_3&=&\Big\{(b,r)\in \Lambda\;:\; \exists! c\in]0,1]\;
	\mbox{ such that $Q$ has a minimum at $x=c$}\Big\}.
\end{eqnarray*}
We note that the sets $\Lambda_i$, $i=1,2,3$, are not empty
since for instance $]0,1[\times [1/b,e]\subset \Lambda_1$
for all $b\in ]0,1]$, $\{1\}\times]0,1[ \subset \Lambda_2$
and $]0,1[\times \{1\}\subset \Lambda_3$. Moreover, we note that $rb>1$ implies
that $(b,r)\in \Lambda_1$ and naturally $\Lambda_2\cup\Lambda_3$ is
a subset of $]0,1]\times ]0,1/b[$. The uniqueness of $c$ can be deduced by noticing
that the solution of $Q'(x)=0$
is equivalent to the intersection of the following two monotone functions
$S(x)=rb^{rx}(\ln b)^2$ and $J(x)=b(1-rb)x^{rb-2}.$

\item[(Q$_2$)] $Q(x)\to\ln b$ when $x\to 0^+$,
and $Q(1)=b^{r}\ln b+b$.
\end{itemize}
From (K$_1$) and (Q$_1$) we deduce the uniqueness of $b\in ]0,1]$
such that $Q(b)=K(b)$ or equivalently $H'(b)=0$. Now, 
from (K$_2$) and (Q$_2$), we note that
$Q(0^+)>K(0^+)$ for all $(r,b)\in \Lambda$ since $K(0^+)=-\infty$.
Then, $H'(x)<0$ for all $x\in ]0,b[$. 
Additionally, from (K$_2$) and (Q$_2$), we observe that $Q(1)<K(1)$.
This fact is a consequence of that the function 
$F(w,r)=\sqrt{w^{rw}}-w^r\ln (w)-w$ is strictly decreasing in
$r$, since $F_r(w,r)=\ln(w)\Big((r/2)\sqrt{w^{rw}}-w^r\ln (w)\Big)<0$.
Consequently, for $r<e$ we have that $F(w,r)>F(w,e)=\sqrt{w^{ew}}-w^e\ln (w)-w>0$
for all $w\in ]0,1].$ Hence, for $w=b$ we get that $F(b,r)>0$ or
$Q(1)<K(1)$, which implies that  $H'(x)>0$ for all $x\in ]b,1]$.
Thus, $b$ is a global minimum of $H$. Therefore,
$H(x)\ge H(b)=0$ for all $x\in ]0,1]$ and in particular
for $x=a.$

\subsection{Proof of Theorem~\ref{teo:conjecture2.2}}
The proof follows by the fact that the function
$P:]0,1]^{n-1}\to\mathbb{R}$ defined the following
correspondence rule
\begin{eqnarray*}
 P(z_1,\ldots,z_{n-1})=
 nx_n^{x_n}\prod_{i=1}^{n}z_i^{z_i}
 -\Bigg(x_n\prod_{j=1}^{n-1}z_j\Bigg)^{x_n}
 -\sum_{i=1}^{n-1}\Bigg(x_n\prod_{j=1}^{n-1}z_j\Bigg)^{x_i},
 \quad x_n\in ]0,1],
\end{eqnarray*}
has a global minimum at $(z_1,\ldots,z_{n-1})=(x_n,\ldots,x_n)$.
Indeed, by simplicity of notation we develop the details
of the proof for $n=3$ and with $(x_1,x_2,x_3)=(a,b,c)$.
Note that, in this case for an
arbitrary $c\in ]0,1]$, the function $P:]0,1]^2\to\mathbb{R}$ 
has the following form
\begin{eqnarray*}
 P(x,y)=3x^xy^yc^c-(xyc)^x-(xyc)^y-(xyc)^c.
\end{eqnarray*}
Then, we have that
\begin{eqnarray*}
 P_x(x,y)&=&3x^xy^yc^c\Big(\ln(x) +1\Big)
	-\Big(\ln(xyc) +1\Big)(xyc)^x
	-\frac{y}{x}(xyc)^y
	-\frac{c}{x}(xyc)^c,
\\
 P_y(x,y)&=&3x^xy^yc^c\Big(\ln(y) +1\Big)
	-\frac{x}{y}(xyc)^x
	-\Big(\ln(xyc) +1\Big)(xyc)^y
	-\frac{c}{y}(xyc)^c,
\\
 P_{xx}(x,y)&=&3x^xy^yc^c\left[\frac{1}{x}+\Big(\ln(x) +1\Big)^2\right]
\\
&&
	-\left[\frac{1}{x}+\Big(\ln(xyc) +1\Big)^2\right](xyc)^x
	-\left[\frac{y^2-y}{x^2}\right](xyc)^y
	-\left[\frac{c^2-c}{x^2}\right](xyc)^c,
\\
 P_{yy}(x,y)&=&3x^xy^yc^c\left[\frac{1}{y}+\Big(\ln(y) +1\Big)^2\right]
\\
&&
	-\left[\frac{x^2-x}{y^2}\right](xyc)^x
	-\left[\frac{1}{y}+\Big(\ln(xyc) +1\Big)^2\right](xyc)^y
	-\left[\frac{c^2-c}{y^2}\right](xyc)^c,
\\
 P_{xy}(x,y)&=&P_{yx}(x,y)=3x^xy^yc^c\Big(\ln(y) +1\Big)\Big(\ln(x) +1\Big)
\\
&&
	-\left[\frac{x\Big(\ln(xyc)+1\Big)+1}{y}\right](xyc)^x
	-\left[\frac{y\Big(\ln(xyc)+1\Big)+1}{x}\right](xyc)^y
	-\left[\frac{c^2}{xy}\right](xyc)^c.
\end{eqnarray*}
An evaluation at $(c,c)$ implies that
\begin{eqnarray*}
P_x(c,c)&=&P_y(c,c)=0,
\\
P_{xx}(c,c)&=&P_{yy}(c,c)=c^{3c-1}\Big(-6c(\ln(c))^2+4\Big),
\\
P_{xy}(c,c)&=&P_{yx}(c,c)=c^{3c-1}\Big(3c(\ln(c))^2-2\Big)
\end{eqnarray*}
Now, defining  $\mathbb{P}_1(w)=-6w(\ln(w))^2+4$ and
$\mathbb{P}_2(w)=27w^2(\ln w)^4-24w(\ln w)^2+4$,
we observe that  $P_{xx}(c,c)=c^{3c-1}\mathbb{P}_1(c)$ and
$P_{xx}(c,c)P_{yy}(c,c)-P_{xy}(c,c)P_{xy}(c,c)=c^{2(3c-1)}\mathbb{P}_2(c)$.
Then, the Hessian matrix asociated to $P$ at $(c,c)$
is positive semidefinite since both functions,
$\mathbb{P}_1$ and $\mathbb{P}_2$,
are positive on  $]0,1]$ or equivalently the function $P$ has
a local minimum at $(c,c)$. Now, we deduce that $(c,c)$
is the global minimum since we can prove that $(c,c)$ is the unique solution
of $(P_x,P_y)=(0,0)$. Indeed,  assuming that
there is $(x,y)$ with $x\not=y\not=c$  such that
$P_x(x,y)=P_y(x,y)=0$, we can deduce a contradiction. 
Note that
\begin{eqnarray*}
0&=&\Big|P_x(x,y)-P_y(x,y)\Big|
\\
&\ge &\Bigg|\min\Big\{3x^xy^yc^c,(xyc)^x,(xyc)^y,(xyc)^c\Big\}\Bigg|
\Bigg|\ln\Bigg(\frac{x}{y}\Bigg)-\frac{y}{x}-\frac{c}{x}+\frac{x}{y}+\frac{c}{y}\Bigg|
\\
&\ge &\Bigg|\min\Big\{3x^xy^yc^c,(xyc)^x,(xyc)^y,(xyc)^c\Big\}\Bigg|
\Bigg|\frac{1}{x}+\frac{x+y}{xy}+\frac{c}{xy}\Bigg|
\Big|x-y\Big|,
\end{eqnarray*}
since the inequality $\ln(r)> (r-1)/r$ holds for all $r>0$ and $r\not=1$
(see for instance \cite{bullen}). Then, $x=y$, which is a contradiction
with the assumption that $x\not=y.$ Thus, we have $(c,c)$ is a global
minimum of the function $P$ or equivalently
$P(x,y)\ge P(c,c)=0$ for all $(x,y)\in ]0,1]^2$,
which implies the desired inequality for $(x,y)=(a,b).$

\section{Aditional remarks on posible generalizations}
\label{sec:extensions}

In this section we present the posible extensions of Theorems~\ref{teo:cirtoaje},
\ref{teo:conjecture4.4} and \ref{teo:conjecture2.1} 
 to a sequence of  positive real numbers.
We note that
the natural generalizations of \eqref{main:inequality} and \eqref{eq:main_raices}
are given by
 \begin{eqnarray}
 &&\sum_{i=1}^{n}x_i^{rx_i}\ge x_n^{rx_1}+ \sum_{i=1}^{n-1}x_i^{rx_{i+1}},
 \quad (x_1,\ldots,x_n)\in \mathbb{R}^n_+, 
 \quad r\in [0,e],
 \label{main:inequality:n}
 \\
 &&n\sqrt[n]{\prod_{i=1}^{n}x_i^{rx_i}}\ge x_n^{rx_1}+ \sum_{i=1}^{n-1}x_i^{rx_{i+1}},
 \quad (x_1,\ldots,x_n)\in ]0,1]^n,
 \label{main:raices:n}
\end{eqnarray}
respectively. We present a partial  proof of \eqref{main:inequality:n}
(see Lemma~\ref{conj:main:n}, below) and  leaves as a conjecture
the proof of  \eqref{main:raices:n}.

\begin{lemma}
\label{conj:main:n}
The inequality given in the equation
\eqref{main:inequality:n}
holds for all $r\in [0,e]$, 
if we restrict  $(x_1,\ldots,x_n)$ to the
hipercube $[0,1]^n$.
\end{lemma}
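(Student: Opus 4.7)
The plan is to proceed by induction on $n$. The base cases $n=2$ and $n=3$ are immediate from Theorem~\ref{teo:cirtoaje} and Theorem~\ref{teo:conjecture4.4}, respectively, restricted to $[0,1]^n$; so only $n\ge 4$ requires work. By a cyclic rotation of indices (which leaves both sides of \eqref{main:inequality:n} invariant), I would assume $x_1=\max_i x_i$. The core idea is to express the target inequality as a telescoping sum of $n-1$ pairwise-like relations: one instance of Theorem~\ref{teo:cirtoaje} applied to the pair $(x_1,x_2)$, namely $x_1^{rx_1}+x_2^{rx_2}\ge x_1^{rx_2}+x_2^{rx_1}$, and, for each $i=2,\ldots,n-1$, a ``pivoting'' inequality of the form
\[
 x_i^{rx_1}+x_{i+1}^{rx_{i+1}}\ge x_i^{rx_{i+1}}+x_{i+1}^{rx_1}.
\]
Adding all $n-1$ relations, the intermediate pivot terms $x_2^{rx_1},\ldots,x_{n-1}^{rx_1}$ cancel in pairs between consecutive links of the chain, and the sum collapses precisely to $\sum_{i=1}^n x_i^{rx_i}\ge x_n^{rx_1}+\sum_{i=1}^{n-1}x_i^{rx_{i+1}}$.

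Each pivoting inequality is to be proved in the spirit of case (c$_3$) of the proof of Theorem~\ref{teo:conjecture4.4}, by applying Proposition~\ref{main:prop} with $t=x_i^{rx_{i+1}}$, $\gamma=x_{i+1}^{rx_{i+1}}$ and $s=x_1/x_{i+1}$ (so $s>1$), and then invoking the strict monotonicity of $f$ on $]g(s),\infty[$ once the condition $\gamma>g(s)$ has been checked. The verification of $\gamma>g(s)$ proceeds by partitioning $[0,e]\times[0,1]$ into two regions $\Omega_1$ and $\Omega_2$ and studying two auxiliary one-variable functions analogous to the $m$ and $q$ of the three-variable proof. The essential inputs are the bound $-rc\ln c\le 1$ (valid for $c\in[0,1]$ and $r\in[0,e]$, cf.\ \eqref{eq:tlnt}) together with the Napier inequality~\eqref{eq:napier}; both hinge on the hypothesis $r\le e$.

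The hard part will be to guarantee that the pivoting inequality is available at every link of the chain. The derivation via Proposition~\ref{main:prop} sketched above succeeds cleanly only under the ``descending'' condition $x_{i+1}<x_i<x_1$, which corresponds to monotone descents in the cyclic sequence $(x_1,\ldots,x_n)$. For $n\ge 4$ a generic cyclic ordering cannot be made monotone by cyclic rotation alone, so ascending links $x_i<x_{i+1}$ must also be handled. For those, I would replace the descending pivoting relation by a complementary inequality obtained from Proposition~\ref{main:prop} in the spirit of the sub-case $b<c<a$ used to complete the proof of Theorem~\ref{teo:conjecture4.4}, namely $x_1^{rx_{i+1}}+x_i^{rx_i}\ge x_1^{rx_i}+x_i^{rx_{i+1}}$ for $x_i<x_{i+1}<x_1\le 1$, and re-organise the chain at each ascending link so that the cancellations still telescope to the target. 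The most delicate step is to show that a valid chain exists for every ascent/descent pattern of $(x_1,\ldots,x_n)\in[0,1]^n$; this reduces to a finite case analysis on the cyclic sign pattern of $x_{i+1}-x_i$, each case yielding either a (**)- or a (***)-type auxiliary inequality whose proof again rests on Proposition~\ref{main:prop} together with the two sharp estimates mentioned above.
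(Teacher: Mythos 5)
Your telescoping identity is algebraically correct, and when $x_1=\max_i x_i$ and the remaining coordinates descend along the cycle, each pivoting inequality $x_i^{rx_1}+x_{i+1}^{rx_{i+1}}\ge x_i^{rx_{i+1}}+x_{i+1}^{rx_1}$ is indeed an instance of the inequality \eqref{casoiii:amax:1_w3} established in case (c$_3$) of the proof of Theorem~\ref{teo:conjecture4.4}. But for $n\ge 4$ the entire burden of the lemma lies in the ascending links, and there your argument stops being a proof. The substitute inequality you propose for an ascent, $x_1^{rx_{i+1}}+x_i^{rx_i}\ge x_1^{rx_i}+x_i^{rx_{i+1}}$, introduces the terms $x_1^{rx_i}$ and $x_1^{rx_{i+1}}$, which occur nowhere else in your chain and therefore do not cancel against the neighbouring links; the sum no longer collapses to \eqref{main:inequality:n}. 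The promise to ``re-organise the chain at each ascending link'' is precisely the missing idea (you even invoke ``(**)''- and ``(***)''-type inequalities that are never stated), and the ``finite case analysis on the cyclic sign pattern'' ranges over $2^{n-1}$ patterns, so it is not finite uniformly in $n$ and would itself require an inductive or structural principle that is not supplied. As written, the proposal proves the lemma only for tuples that are cyclically monotone after rotation.

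The paper sidesteps this combinatorial difficulty with a different decomposition. It inducts on $n$ by peeling off the last coordinate, writing the difference of the two sides of \eqref{main:inequality:n} as $\mathbb{K}_1+\mathbb{K}_2+\mathbb{K}_3$, where $\mathbb{K}_1$ is the $(n-1)$-variable inequality (inductive hypothesis), $\mathbb{K}_2=x_n^{rx_n}+x_{n-1}^{rx_{n-1}}-x_n^{rx_{n-1}}-x_{n-1}^{rx_n}\ge 0$ is Theorem~\ref{teo:cirtoaje}, and the single correction term
\begin{eqnarray*}
\mathbb{K}_3=x_n^{rx_{n-1}}-x_n^{rx_1}-x_{n-1}^{rx_{n-1}}+x_{n-1}^{rx_1}
=t^{s}-t-\gamma^{s}+\gamma,
\qquad t=x_n^{rx_1},\ \gamma=x_{n-1}^{rx_1},\ s=\frac{x_{n-1}}{x_1},
\end{eqnarray*}
is handled by the sign of the two-variable auxiliary function $\Upsilon(x,y)=x^{s}-x-y^{s}+y$ on the unit square; this is the only point where the restriction to $[0,1]^n$ enters, and no ordering of the $x_i$ is ever invoked. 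Thus one extra inequality per induction step suffices, whereas your route needs $n-1$ order-dependent pivot inequalities. If you wish to salvage the chain argument, you must produce, for each ascending link, an inequality in exactly the four terms your telescoping consumes at that link; establishing such a statement is essentially equivalent to the case you have left open, not a reduction of it.
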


\begin{proof}
Before, of the start the proof, we notice that the function
$\Upsilon(x,y)=x^{a/b}-x-y^{a/b}+y$ defined from $\mathbb{R}^2_+\to\mathbb{R}$
and for $a>b$ is concave and $\Upsilon(0,0)=\Upsilon(1,0)=\Upsilon(0,1)=\Upsilon(1,1)=0$.
Then $\Upsilon(x,y)\ge 0$ for all $(x,y)\in [0,1]\times [0,1]$.
Similarly, the function $\Upsilon_s(w,z)=\Upsilon(z,w)$ for $a<b$ is 
concave and $\Upsilon_s(w,z)\ge 0$ for all $(w,z)\in [0,1]\times [0,1]$.
Now, we proceed by induction on $n$. Let us assume that the theorem
is valid for a sequence of positive numbers $(x_1,\ldots,x_k)$ for all $k< n$. 
We note that
\begin{eqnarray}
&&\sum_{i=1}^{n}x_i^{rx_i}- x_n^{rx_1}- \sum_{i=1}^{n-1}x_i^{rx_{i+1}}
\nonumber\\
&&
\hspace{1cm}
=
\Bigg[\sum_{i=1}^{n-1}x_i^{rx_i}-x_{n-1}^{rx_1}- \sum_{i=1}^{n-2}x_i^{rx_{i+1}}\Bigg]
+\Bigg[x_{n}^{rx_{n}}+x_{n-1}^{rx_{n-1}}-x_{n}^{rx_{n-1}}-x_{n-1}^{rx_{n}}\Bigg]
\nonumber\\
&&
\hspace{1cm}
\qquad
+\Bigg[x_{n}^{rx_{n-1}}-x_{n}^{rx_{1}}-x_{n-1}^{rx_{n-1}}+x_{n-1}^{rx_{1}}\Bigg]
\nonumber\\
&&
\hspace{1cm}
:=
\mathbb{K}_1+\mathbb{K}_2+\mathbb{K}_3.
\label{eq:incuction:teo:main:n}
\end{eqnarray}
The terms $\mathbb{K}_1$ and $\mathbb{K}_2$ are positive by the inductive hypothesis.
Meanwhile, the term $\mathbb{K}_3$ is positive by the coancavity
of the functions $\Upsilon$ and $\Upsilon_s.$
Note that $a=x_{n-1}$ and $b=x_n$ and  
$\mathbb{K}_3=\Upsilon(x_{n}^{rx_1},x_{n-1}^{rx_1})$
or $\mathbb{K}_3=\Upsilon_s(x_{n-1}^{rx_1},x_{n}^{rx_1})$,
depending if $x_{n-1}>x_1$ or $x_{n-1}<x_1$, respectively. 
Then, by \eqref{eq:incuction:teo:main:n} we follow that
the Lemma is valid.
\end{proof}

\begin{conjeture}
\label{newconjeture_uno}
Let $n\in\mathbb{N}$ and $n>4$.
Then, the inequality
\eqref{main:inequality:n}
holds for all  $(x_1,\ldots,x_n)\in \mathbb{R}^n_+$ and $r\in [0,e]$.
\end{conjeture}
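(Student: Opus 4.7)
The plan is to prove Conjecture~\ref{newconjeture_uno} by induction on $n$, taking as base cases $n=2$ (Theorem~\ref{teo:cirtoaje}) and $n=3$ (Theorem~\ref{teo:conjecture4.4}). For the inductive step I would start from the algebraic identity already exploited in the proof of Lemma~\ref{conj:main:n}, namely
\begin{eqnarray*}
D_n := \sum_{i=1}^{n}x_i^{rx_i} - x_n^{rx_1} - \sum_{i=1}^{n-1}x_i^{rx_{i+1}} = D_{n-1}+\mathcal{L},
\end{eqnarray*}
where $D_{n-1}$ is the same functional evaluated at $(x_1,\ldots,x_{n-1})$ and $\mathcal{L}=x_n^{rx_n}-x_{n-1}^{rx_n}+x_{n-1}^{rx_1}-x_n^{rx_1}$ coincides with the sum $\mathbb{K}_2+\mathbb{K}_3$ appearing in Lemma~\ref{conj:main:n}. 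The inductive hypothesis yields $D_{n-1}\ge 0$ on all of $\mathbb{R}^{n-1}_+$, so everything reduces to a control on $\mathcal{L}$.

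The next step is to fit $\mathcal{L}$ into the framework of Proposition~\ref{main:prop}. With $s=x_{n-1}/x_1$, $t=x_n^{rx_1}$ and $\gamma=x_{n-1}^{rx_1}$, the $\mathbb{K}_3$ piece is exactly $f(t)=t^s-t-\gamma^s+\gamma$, while $\mathbb{K}_2$ is the two-variable Cirtoaje surplus for the pair $(x_{n-1},x_n)$, non-negative by Theorem~\ref{teo:cirtoaje}. I would then run the case analysis of Sect.~\ref{sec:main_res}, splitting according to the positions of $x_1,x_{n-1},x_n$ with respect to $1$ and of $\gamma$ with respect to the critical point $g(s)$: in each ``aligned'' regime the monotonicity properties of $f$ from Proposition~\ref{main:prop}(ii)--(iii), combined with the Napier-type estimate~\eqref{eq:napier} and the bound~\eqref{eq:tlnt}, give $\mathcal{L}\ge 0$ directly and close the induction.

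The main obstacle is that $\mathcal{L}$ is genuinely sign-indefinite on $\mathbb{R}^n_+$: already for $n=3$, the choice $(x_1,x_{n-1},x_n)=(5,\,1/2,\,1/100)$ with $r=1$ gives $\mathcal{L}\approx -10^{-2}<0$, while $D_{n-1}$ is enormous and so $D_n\ge 0$ still holds with a comfortable margin. Treating such ``misaligned'' regimes forbids any decoupled estimate for $\mathcal{L}$ and forces a \emph{quantitative} refinement of Proposition~\ref{main:prop}: a lower bound on the Cirtoaje gap inside $D_{n-1}$, expressed in terms of the spread of $x_1,\ldots,x_{n-1}$, that is strong enough to absorb the possible deficit $-\mathcal{L}$. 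Producing such a sharpened two-variable estimate, or alternatively replacing the rigid ``peel off $x_n$'' induction by a flexible scheme that picks the consecutive pair $(x_i,x_{i+1})$ to peel off adaptively so as to avoid sign-indefinite correction terms, is in my view the central difficulty and is presumably what has kept the conjecture open for $n>4$.
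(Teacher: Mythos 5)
This statement is labelled a \emph{conjecture} in the paper: no proof of it is given there, and the authors only establish the restricted version (Lemma~\ref{conj:main:n}) on the hypercube $[0,1]^n$. So there is nothing in the paper to compare your argument against, and the real question is whether your proposal closes the gap the authors left open. It does not, and you say so yourself: the entire inductive step hinges on showing that the correction term $\mathcal{L}=\mathbb{K}_2+\mathbb{K}_3$ is nonnegative (or that its deficit is absorbed by $D_{n-1}$), and your own example $(x_1,x_{n-1},x_n)=(5,1/2,1/100)$, $r=1$ shows $\mathcal{L}<0$. This is exactly the obstruction that confines the paper's Lemma~\ref{conj:main:n} to $[0,1]^n$: there the authors control $\mathbb{K}_3$ via the concavity of $\Upsilon(x,y)=x^{a/b}-x-y^{a/b}+y$ together with its vanishing at the corners of the unit square, an argument that has no analogue once the arguments $x_i^{rx_1}$ may exceed~$1$. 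Your proposed remedies --- a quantitative lower bound on the two-variable Cirtoaje gap in terms of the spread of the variables, or an adaptive choice of which consecutive pair to peel off --- are stated as directions, not carried out, and neither is accompanied by an estimate that could plausibly be checked. A ``proof'' whose central step is explicitly identified as an unresolved difficulty is not a proof.

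What you have actually produced is a correct and useful diagnosis of why the naive induction fails on $\mathbb{R}^n_+$, which is consistent with the statement remaining open. If you want to make progress, the concrete missing ingredient is a lower bound of the form $a^{ra}+b^{rb}-a^{rb}-b^{ra}\ge \Phi(a,b,r)$ with $\Phi$ explicit and large enough, in the regimes where some $x_i>1$ and some $x_j<1$, to dominate the worst-case negative value of $\mathcal{L}$; absent that, the induction scheme cannot be salvaged, and the statement should continue to be presented as a conjecture.
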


\begin{conjeture}
 \label{newconjeture_dos}
Let $n\in\mathbb{N}$ and $n\ge 3$.
Then, the inequality \eqref{main:raices:n}
holds  for all  $r\in [0,e]$.
\end{conjeture}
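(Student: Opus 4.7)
The plan is to adapt the critical-point technique used in the proof of Theorem~\ref{teo:conjecture2.2}. Fix $x_n\in\,]0,1]$ and $r\in[0,e]$, and define $H:\,]0,1]^{n-1}\to\R$ by
\[
H(z_1,\ldots,z_{n-1})
= n\sqrt[n]{x_n^{rx_n}\prod_{i=1}^{n-1} z_i^{rz_i}}
- x_n^{rz_1}
- z_{n-1}^{rx_n}
- \sum_{i=1}^{n-2} z_i^{rz_{i+1}}.
\]
Since $H(x_n,\ldots,x_n)=nx_n^{rx_n}-nx_n^{rx_n}=0$, it is enough to prove that the diagonal point $\mathbf{z}^{\star}=(x_n,\ldots,x_n)$ is a global minimizer of $H$ on $\,]0,1]^{n-1}$; evaluating at $(z_1,\ldots,z_{n-1})=(x_1,\ldots,x_{n-1})$ then yields \eqref{main:raices:n}.

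First I would verify by a cyclic-symmetry computation that $\nabla H(\mathbf{z}^{\star})=\mathbf{0}$, in complete analogy with the identities $P_x(c,c)=P_y(c,c)=0$ from the proof of Theorem~\ref{teo:conjecture2.2}. Next I would compute the Hessian of $H$ at $\mathbf{z}^{\star}$: by the cyclic symmetry it is expected to be a circulant-like matrix whose diagonal entries factor as $x_n^{nrx_n-1}\mathbb{Q}_1(x_n,r)$, whose ``adjacent'' off-diagonal entries factor as $x_n^{nrx_n-1}\mathbb{Q}_2(x_n,r)$ (generalizing the role of $\mathbb{P}_1$ and $\mathbb{P}_2$), and whose remaining off-diagonal entries are driven by the geometric-mean contribution alone. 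Positive semidefiniteness would then reduce to verifying finitely many explicit one-variable polynomial inequalities on $\,]0,1]\times[0,e]$, a task that is in principle mechanical for each fixed $n$.

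The crucial step is to establish uniqueness of the critical point of $H$ in $\,]0,1]^{n-1}$. Following the three-variable argument, one would assume a second critical point $\mathbf{z}\neq\mathbf{z}^{\star}$ with $z_i\neq z_j$ for some $i\neq j$, subtract $H_{z_i}-H_{z_j}=0$, factor out the pointwise minimum of the canonical terms built from $n\sqrt[n]{x_n^{rx_n}\prod z_k^{rz_k}}$, $x_n^{rz_1}$, $z_{n-1}^{rx_n}$ and $z_k^{rz_{k+1}}$, and then apply the elementary inequality $\ln t>(t-1)/t$ for $t\neq 1$ to force $|z_i-z_j|=0$, contradicting the assumption. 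The boundary behavior as some $z_j\to 0^{+}$ can be controlled by noting that $z_j^{rz_j}\to 1$ and that the finitely many cyclic terms containing $z_j$ remain bounded, so no boundary point can undercut $H(\mathbf{z}^{\star})=0$.

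The main obstacle I foresee is precisely this uniqueness argument for $n\ge 4$: the one-line telescoping $|P_x-P_y|\ge |\text{min}|\cdot|\,\cdot\,|\cdot|x-y|$ used in the proof of Theorem~\ref{teo:conjecture2.2} exploited a simplification very specific to two active variables, and the combinatorics of the analogous factorization across $n-1$ simultaneous partial derivatives are not transparent. A fallback plan would be to induct on $n$ using Theorem~\ref{teo:conjecture2.1} as the base case and a splitting of the cyclic sum in the style of the proof of Lemma~\ref{conj:main:n}, though the geometric-mean structure of the left-hand side (as opposed to a plain sum) appears to obstruct any clean inductive decomposition, so this route seems harder than the direct critical-point attack outlined above.
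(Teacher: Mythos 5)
There is nothing in the paper to compare against here: the statement you were asked to prove is explicitly left \emph{open} by the authors --- they write that they ``leave as a conjecture the proof of \eqref{main:raices:n}'' and offer no argument for it (only the weaker additive variant \eqref{main:inequality:n} is partially handled, in Lemma~\ref{conj:main:n}, and only on the hypercube). Your submission is accordingly a research plan rather than a proof, and it does not close the conjecture.

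Concretely, the gaps are the ones you yourself flag plus one you understate. (1)~The positive semidefiniteness of the Hessian of $H$ at the diagonal is not verified; worse, your proposed reduction to ``finitely many explicit one-variable inequalities for each fixed $n$'' cannot yield the statement, which is quantified over all $n\ge 3$ --- you would need estimates uniform in $n$, and the circulant structure you invoke is only conjectured, since fixing $x_n$ breaks the cyclic symmetry away from the diagonal. (2)~The uniqueness of the critical point is the load-bearing step and is entirely missing for $n\ge 4$; the two-variable telescoping $|P_x-P_y|\ge |\min\{\cdot\}|\,|\cdot|\,|x-y|$ from the proof of Theorem~\ref{teo:conjecture2.2} pairs off exactly two partial derivatives, and there is no indicated mechanism for extracting $z_i=z_j$ from a system of $n-1$ vanishing gradients. (3)~Even granting a unique interior critical point that is a local minimum, concluding it is a \emph{global} minimum on the non-compact set $]0,1]^{n-1}$ requires controlling $H$ as some $z_j\to 0^{+}$; your remark that the cyclic terms ``remain bounded'' does not show $H$ stays nonnegative there (boundedness of the subtracted terms is compatible with $H$ dipping below $0$ near the boundary), so an actual lower bound is needed. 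Until these three points are supplied, the conjecture remains open, as the paper states.
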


\begin{conjeture}
 \label{newconjeture_tres}
Let $n\in\mathbb{N}$ and $x_i\in ]0,1]^n$.
Then, the inequality
 \begin{eqnarray*}
 n\prod_{i=1}^{n}x_i^{rx_i}\ge \sum_{i=1}^{n}\Big(\prod_{j=1}^{n}x_j\Big)^{rx_i}
\end{eqnarray*}
holds for all  $r\in [0,e]$.
\end{conjeture}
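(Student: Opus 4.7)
The plan is to adapt the variational method used in the proof of Theorem~\ref{teo:conjecture2.2} to the parameter-dependent setting. Fix $c=x_n\in\, ]0,1]$ and $r\in\, ]0,e]$, and introduce the auxiliary function
\begin{eqnarray*}
P_r(z_1,\ldots,z_{n-1})
=nc^{rc}\prod_{i=1}^{n-1}z_i^{rz_i}
-\Bigg(c\prod_{j=1}^{n-1}z_j\Bigg)^{rc}
-\sum_{i=1}^{n-1}\Bigg(c\prod_{j=1}^{n-1}z_j\Bigg)^{rz_i}
\end{eqnarray*}
on the hypercube $]0,1]^{n-1}$. The goal is to prove $P_r\ge 0$ with a global minimum attained at $(c,\ldots,c)$, where $P_r$ vanishes; specialising $(z_1,\ldots,z_{n-1})=(x_1,\ldots,x_{n-1})$ then yields the conjectured inequality.

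The first step is to check $P_r(c,\ldots,c)=0$ by direct substitution (both the positive and negative terms collapse to $nc^{nrc}$), and that $(c,\ldots,c)$ is a critical point. By the permutation symmetry of $P_r$ in its arguments, the gradient at the diagonal is a multiple of $(1,\ldots,1)$ and its common entry vanishes after exactly the cancellation used in the proof of Theorem~\ref{teo:conjecture2.2}, only carrying the extra factor~$r$. For the second-order test, illustrated at $n=3$ with variables $(x,y)$, a careful differentiation yields
\begin{eqnarray*}
P_{r,xx}(c,c)=rc^{3rc-1}\bigl(4-6rc(\ln c)^2\bigr),
\qquad
P_{r,xy}(c,c)=rc^{3rc-1}\bigl(3rc(\ln c)^2-2\bigr),
\end{eqnarray*}
and a short algebraic simplification gives $\det\mathrm{Hess}\,P_r(c,c)=3\bigl(rc^{3rc-1}\bigr)^2\bigl(2-3rc(\ln c)^2\bigr)^2\ge 0$.

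To promote the local minimum to a global one, I would reuse the uniqueness-of-critical-point argument from the proof of Theorem~\ref{teo:conjecture2.2}: bounding $|P_{r,x}(x,y)-P_{r,y}(x,y)|$ below by a strictly positive factor times $|x-y|$, via the inequality $\ln(t)>(t-1)/t$ for $t\ne 1$ taken from~\cite{bullen}, forces any interior critical point of $P_r$ onto the diagonal $z_1=\cdots=z_{n-1}$. The one-dimensional restriction to the diagonal is then handled by Theorem~\ref{teo:cirtoaje} together with the boundary behaviour as each $z_i\to 0^+$. The extension from $n=3$ to general $n$ would be carried out by induction: freeze $c=x_n$, apply the inductive hypothesis on the $(n-1)$-dimensional face $\{z_i=c\}$, and glue with the local analysis of $P_r$.

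The main obstacle is the positivity of $P_{r,xx}(c,c)$, which requires the uniform bound $rc(\ln c)^2\le 2/3$ on $]0,e]\times\, ]0,1]$. Since $c(\ln c)^2$ attains the maximum $4e^{-2}$ at $c=e^{-2}$, this local criterion holds only for $r\le e^{2}/6\approx 1.23$, well short of the full range $]0,e]$. To close the gap I would split the parameter interval and treat $r\in\, ]e^2/6,e]$ by a global comparison, for instance through the sign of $\partial_r P_r$ or a Napier-type estimate in the spirit of~\eqref{eq:napier} that sidesteps the second-order test near the degenerate point $c=e^{-2}$. This global refinement near the endpoint $r=e$ is the principal difficulty in establishing the conjecture in its full generality.
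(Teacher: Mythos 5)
First, be aware that the paper does not prove this statement at all: it is posed as Conjecture~\ref{newconjeture_tres}, one of the three open problems with which the paper closes, so there is no proof of record to compare yours against. Within your attempt, the computations are correct: at the diagonal critical point one indeed gets $P_{r,xx}(c,c)=rc^{3rc-1}\bigl(4-6rc(\ln c)^2\bigr)$, $P_{r,xy}(c,c)=rc^{3rc-1}\bigl(3rc(\ln c)^2-2\bigr)$, and determinant $3\bigl(rc^{3rc-1}\bigr)^2\bigl(2-3rc(\ln c)^2\bigr)^2$; and positive semidefiniteness of the Hessian requires $rc(\ln c)^2\le 2/3$, which fails for $r>e^2/6$ because $c(\ln c)^2$ attains its maximum $4e^{-2}$ at $c=e^{-2}$.

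The gap is that you treat this failure as a limitation of the method, to be repaired by a ``global comparison'' near $r=e$. It is not a limitation of the method; it is a disproof of the statement. The point $(c,\ldots,c)$ is always a critical point of $P_r$ at which $P_r$ vanishes, so whenever the Hessian there is negative definite (here $P_{r,xx}(c,c)<0$ combined with a strictly positive determinant), the function $P_r$ is strictly negative throughout a punctured neighbourhood of $(c,\ldots,c)$, and the conjectured inequality fails at nearby points. Concretely, take $n=3$, $r=2$, $x_1=0.15$, $x_2=x_3=0.135$, so that $c=0.135\approx e^{-2}$ and $rc(\ln c)^2\approx 1.08>2/3$: then $3x_1^{rx_1}x_2^{rx_2}x_3^{rx_3}\approx 0.57587$ while $(x_1x_2x_3)^{rx_1}+(x_1x_2x_3)^{rx_2}+(x_1x_2x_3)^{rx_3}\approx 0.57663$, so the claimed inequality is violated. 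Since for every $r>e^2/6$ one can choose $c$ near $e^{-2}$ with $rc(\ln c)^2>2/3$, no refinement can rescue the full range $r\in[0,e]$; the statement could at best survive for $r\le e^2/6\approx 1.23$. The correct conclusion to draw from your own second-order analysis is a counterexample to the conjecture, not a harder route to a proof.
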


\section*{Acknowledgement}
We acknowledge the support of ``Univesidad del B{\'\i}o-B{\'\i}o" (Chile) through
the research projects 124109 3/R, 104709 01 F/E and 
121909 GI/C.

\end{document}